\documentclass[12pt]{amsart}
\usepackage{amsmath, amssymb, amsthm, bm, float, mathtools, enumitem, caption}
\usepackage{hyperref}

\usepackage[noabbrev,capitalize]{cleveref}
\usepackage{adjustbox}
\crefname{equation}{}{}
\usepackage{ytableau}
\usepackage{graphicx, tikz}
\usepackage[textheight=8.75in, textwidth=6.7in]{geometry}
\usepackage{subcaption}
\usepackage{microtype}


\newtheorem{theorem}{Theorem}[section]
\newtheorem{lemma}[theorem]{Lemma}

\newtheorem*{conjecture*}{Conjecture}
\newtheorem*{tworemarks}{Two Remarks}
\newtheorem*{problem}{Problem}

\theoremstyle{definition}

\theoremstyle{remark}
\newtheorem*{remark}{Remark}

\numberwithin{equation}{section}


\newcommand{\Proportion}{\mathrm{Pr}_2}














\newcommand{\SL}{\mathrm{SL}}

\newcommand{\Q}{\mathbb Q}

\newcommand{\Z}{\mathbb Z}
\newcommand{\ord}{\operatorname{ord}}
\newcommand{\lcm}{\operatorname{lcm}}
\newcommand{\leg}[2]{\left(\frac{#1}{#2}\right)}

\title[A note on Odd partition numbers]{A note on Odd partition numbers}
\dedicatory{Celebrating George E. Andrews and Bruce C. Berndt on their 85th birthdays}
\thanks{2020 {\it{Mathematics Subject Classification.}} {11P81, 11P83, 05A17}}
\keywords{partition function congruences}
\author{Michael Griffin and Ken Ono}
\address{Dept. of Mathematics, Vanderbilt University, Nashville, TN 37240}
\email{michael.j.griffin@vanderbilt.edu}
\address{Dept. of Mathematics, University of Virginia, Charlottesville, VA 22904}
\email{ko5wk@virginia.edu}

\begin{document}
\begin{abstract}  Ramanujan's celebrated partition congruences modulo $\ell\in \{5, 7, 11\}$ assert that
$$
p(\ell n+\delta_{\ell})\equiv 0\pmod{\ell},
$$
where $0<\delta_{\ell}<\ell$ satisfies $24\delta_{\ell}\equiv 1\pmod{\ell}.$  By proving Subbarao's Conjecture, Radu showed that there are no such congruences when it comes to parity. There are infinitely many odd (resp. even) partition numbers in every arithmetic progression.
For primes $\ell \geq 5,$ we give a new proof of the conclusion that there are infinitely many $m$ for which $p(\ell m+\delta_{\ell})$ is odd. This proof uses a generalization, due to the second author and Ramsey, of a result of Mazur in his classic paper on the Eisenstein ideal. We also refine a classical criterion of Sturm for modular form congruences, which allows us to show that the smallest such $m$ satisfies $m<(\ell^2-1)/24,$
 representing a significant improvement to the previous bound. 
\end{abstract}

\maketitle
\section{Introduction and Statement of Results}

A {\it partition of size $n$} is any nonincreasing sequence of positive integers that sums to $n$. The partition function $p(n)$ counts the partitions of size $n$, and has the convenient generating function
$$
\sum_{n=0}^{\infty} p(n)q^n=\prod_{n=1}^{\infty}\frac{1}{1-q^n}=1+q+2q^2+3q^3+5q^4+7q^5+11q^6+\dots.
$$
Ramanujan famously proved \cite{Ramanujan} the congruences
\begin{displaymath}
\begin{split}
p(5n+4)&\equiv 0\pmod 5,\\
p(7n+5)&\equiv 0\pmod 7,\\
p(11n+6)&\equiv 0\pmod{11},
\end{split}
\end{displaymath}
which are uniformly described by the congruence
\begin{equation}\label{RamaAPS}
p(\ell n+\delta_{\ell})\equiv 0\pmod{\ell},
\end{equation}
where $\ell \in \{5, 7, 11\}$ and $0<\delta_{\ell}<\ell$ satisfies $24\delta_{\ell}\equiv 1\pmod{\ell}.$

Here we consider the parity of $p(n).$ Table~\ref{table} offers some values of
$$
\Proportion(N):= \frac{\#\{ 0\leq n < N \ : \ p(n)\ {\text {\rm is even}}\}}{N},
$$
 the proportion of the first $N$ values that are even.

\begin{table}[H]
\begin{tabular}{|r|r| }
\hline \rule[-3mm]{0mm}{8mm}
$N\ \ \ \ $ &$\Proportion(N)\ \ \ $\\
\hline
200,000&0.5012\dots\\
600,000&0.5000\dots\\
1,000,000&0.5004\dots\\
$\infty\ \ \ \  $ &$\frac{1}{2}?\ \ \ \ \ $\\
\hline
\end{tabular}

\caption{Proportion of even values}{\label{table}}
\end{table}

These numerics suggest the widely held belief that the parity of the partition function is randomly distributed \cite{ParkinShanks}. Unfortunately, very little is known. Perhaps the strongest result in this direction is Radu's proof \cite{Radu} of Subbarao's Conjecture, which asserts that every arithmetic progression $r\pmod t$ contains infinitely many integers $N\equiv r\pmod t$ for which $p(N)$ is even, and infinitely many integers $M\equiv r\pmod t$ for which $p(M)$ is odd.  Radu's work built on previous papers \cite{OnoSubbarao1, OnoSubbarao2} by the second author, which proved the ``even cases'' of the conjecture, and offered partial results in the ``odd cases''. To be precise, the second author proved that there are infinitely many $M\equiv r\pmod t$ for which $p(M)$ is odd, provided that there is at least one such $M.$ Moreover, if there is such an $M,$ then he proved that the smallest one satisfies
\begin{equation}\label{bound}
M<\frac{2^{23+j}\cdot 3^7t^6}{d^2}\prod_{\substack{p\mid 6t\\ prime}}\left(1-\frac{1}{p^2}\right)-2^j,
\end{equation}
where $d:=\gcd(24r-1,t)$ and $j$ is an integer for which $2^j>t/24.$

\begin{problem} Determine an upper bound for the smallest $N\equiv r\pmod t$ for which $p(N)$ is even.
\end{problem}

We offer a new proof of the odd case of Subbarao's Conjecture for the  family of arithmetic progressions including those in (\ref{RamaAPS}). Moreover, in these cases we obtain a significant improvement to the bound in (\ref{bound}).

\begin{theorem}\label{MainThm}
If $\ell \geq 5$ is prime, then there are infinitely many $m$ for which $p(\ell m+\delta_{\ell})$ is odd. Moreover, the smallest such $m$ satisfies $m<(\ell^2-1)/24.$
\end{theorem}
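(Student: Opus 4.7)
The plan is to encode the parities of $\{p(\ell m+\delta_\ell)\}_{m\geq 0}$ as the Fourier coefficients mod $2$ of an explicit holomorphic half-integral weight modular form $F_\ell$, then rule out $F_\ell \equiv 0 \pmod 2$ using the Ono--Ramsey generalization of Mazur's Eisenstein ideal theorem, and finally apply the sharpened Sturm criterion developed in the paper to convert non-vanishing into the quantitative bound $m<(\ell^2-1)/24$.

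First I would construct $F_\ell$ as follows. Since $\eta(24z)^{-1}=\sum_{n\geq 0} p(n)q^{24n-1}$ is a weakly holomorphic modular form of weight $-1/2$, Atkin's $U_\ell$ operator produces
\[
U_\ell\!\left(\tfrac{1}{\eta(24z)}\right) \;=\; \sum_{m\geq 0} p(\ell m+\delta_\ell)\,q^{24m+(24\delta_\ell-1)/\ell},
\]
which isolates exactly the arithmetic progression of interest. Multiplying this weakly holomorphic form by a carefully chosen $\eta$-quotient---and exploiting the elementary identity $\eta(24z)^\ell\equiv\eta(24\ell z)\pmod 2$---produces a genuine holomorphic half-integral weight form $F_\ell$ on a congruence subgroup containing $\Gamma_0(576\ell)$, whose mod-$2$ reduction equals the series above multiplied by a unit in $\F_2[[q]]$. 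In particular, $F_\ell\not\equiv 0\pmod 2$ is equivalent to at least one $p(\ell m+\delta_\ell)$ being odd.

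Next, I would invoke the Ono--Ramsey extension of Mazur's theorem to conclude $F_\ell\not\equiv 0\pmod 2$. The essence of that result is the identification of a non-trivial Eisenstein-type quotient of the relevant mod-$2$ Hecke algebra, together with the observation that forms of the $\eta$-quotient shape we built lie in a Hecke-stable subspace where vanishing mod $2$ is structurally impossible. From $F_\ell\not\equiv 0\pmod 2$, the infinitude of $m$ with $p(\ell m+\delta_\ell)$ odd follows from the standard fact that a nonzero half-integral weight modular form mod $2$ has infinitely many nonzero Fourier coefficients.

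Finally, to derive the bound $m<(\ell^2-1)/24$, I would apply the paper's refinement of Sturm's criterion. The classical Sturm bound on $\Gamma_0(576\ell)$ is orders of magnitude too large; the refinement exploits the orders of $F_\ell$ at the non-infinity cusps (readily computed from the $\eta$-factorization) to produce a much smaller threshold $B_\ell$ such that $F_\ell\not\equiv 0\pmod 2$ forces a nonzero Fourier coefficient at some exponent $n\leq B_\ell$. Substituting $n=24m+(24\delta_\ell-1)/\ell$ and using $\ell^2\equiv 1\pmod{24}$ (valid for every prime $\ell\geq 5$), one verifies that $B_\ell$ corresponds exactly to $m<(\ell^2-1)/24$. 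The principal technical obstacle is this cuspidal accounting: obtaining the \emph{sharp} constant requires verifying that $F_\ell$ vanishes to the maximal allowable order at every cusp of $\Gamma_0(576\ell)$ away from $\infty$, which is delicate but routine.
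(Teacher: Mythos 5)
Your overall architecture (encode the parity of $p(\ell m+\delta_\ell)$ in a modular form, use the Ono--Ramsey theorem for non-vanishing mod $2$, use a refined Sturm criterion for the bound) matches the paper, but the central step is missing. The Ono--Ramsey theorem is not, as you describe it, a statement about an ``Eisenstein-type quotient of the mod-$2$ Hecke algebra'' on which vanishing is ``structurally impossible'' for eta-quotients; as used here it says that a meromorphic form on $\Gamma_0(N)$ whose Fourier expansion mod $p$ is supported on exponents divisible by some $c>1$ with $\gcd(Np,c)=1$ must be congruent to a constant mod $p$. You cannot apply it directly to your $F_\ell$ to conclude $F_\ell\not\equiv 0\pmod 2$; nothing in its hypotheses or conclusion addresses non-vanishing. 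The paper's actual mechanism is a contrapositive built from the weight-$0$ Hecke operator: with $F(\tau)=\eta(3\tau)^8/\eta(6\tau)^8\equiv\sum_n p(n)q^{24n-1}\pmod 2$ a modular \emph{function} on $\Gamma_0(18)$, one has $F|_0T_\ell\equiv F|U_\ell+F|V_\ell\pmod 2$; if $F|U_\ell$ vanished mod $2$, then $F|_0T_\ell\equiv F|V_\ell$ would be supported on exponents divisible by $\ell$, hence constant mod $2$ by Ono--Ramsey, contradicting the $q^{-\ell}$ principal part. Without this (or an equivalent) argument your non-vanishing claim is unsupported. Separately, the identity $\eta(24z)^\ell\equiv\eta(24\ell z)\pmod 2$ is false for odd $\ell$ (the correct mod-$2$ fact is $f(q)^2\equiv f(q^2)\pmod 2$), so the construction of your holomorphic $F_\ell$ does not go through as written.

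The quantitative step is also not established. On $\Gamma_0(576\ell)$ the index is of order $10^3\ell$, and for a holomorphic positive-weight $F_\ell$ the Sturm threshold starts from the large positive term $\frac{k}{12}[\SL_2(\Z):\Gamma]$; to reach $m<(\ell^2-1)/24$ you would need to exhibit enormous vanishing at every other cusp, which you defer as ``routine'' without any computation. The paper instead applies Theorem~\ref{SturmTheorem} to the weight-$0$ function $G_\ell=G|U_\ell$ with $G=\eta(\tau)^8/\eta(2\tau)^8$ on $\Gamma_0(2\ell)$, where the index is only $3(\ell+1)$, there are just four cusps, the main term $\frac{k}{12}[\SL_2(\Z):\Gamma]$ vanishes, and an explicit computation of the four cusp orders (the cusp $1/6$ contributing $-\ell/3$ with multiplicity $\ell$) yields $\ord_2(G_\ell)\leq(\ell^2-2)/3$ and hence the stated bound on $m$. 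That explicit cusp computation is the real content of the quantitative half of the theorem and is absent from your proposal.
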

\begin{tworemarks} \ \newline
\noindent
(1) In a way, Theorem~\ref{MainThm} is ``sharp''. Indeed, for $\ell=5,$ we find that $p(4)$, which happens to equal 5, must be odd, as $m=0<(5^2-1)/24=1$ is the only option. 

\noindent
(2) Theorem~\ref{MainThm} has a generalization where $\ell$ can be replaced with  any $t\in \Z_+$ coprime to 6. The smallest $m$ for which $p(tm+\delta_t)$ is odd, where $0<\delta_t<t$ and $24\delta_t\equiv 1\pmod t$, satisfies 
$$
m< \frac{t}{24}  \prod_{\substack{p\mid t\\ prime}}p.
$$
\end{tworemarks}

To prove this theorem, we apply a result (see Theorem~\ref{OnoRamseyThm}) by the second author and Ramsey \cite{OnoRamsey} that is a generalization of a result of Mazur in his work on the Eisenstein ideal. We also require a refinement of a classical criterion of Sturm (see Theorem~\ref{SturmTheorem}) for modular form congruences. In Section~\ref{MainProof} we apply these results to specific modular functions obtained by applying Hecke operators to a weight 0 eta-quotient that encodes the parity of $p(n).$

\section*{Acknowledgements}
\noindent
 The second author is grateful for the support of the Thomas Jefferson Fund and the NSF
(DMS-2002265 and DMS-2055118). 

\section{Preliminaries about modular form congruences}

The proof of Theorem~\ref{MainThm} makes use of  the theory of modular forms (for example, see \cite{OnoCBMS}). To be precise, we require a ``modulo $\ell$'' Atkin-Lehner theorem, and a refinement of a classical criterion of Sturm for modular form congruences. Here we recall and derive these results.

\subsection{A modulo $\ell$ Atkin-Lehner theorem}

The main result of 
\cite{OnoRamsey} is a ``modulo'' $\ell$ Atkin-Lehner Theorem\footnote{As we let $\ell$ denote the primes in (\ref{RamaAPS}), we state Theorem 2.1 with $p$ in place of $\ell$ to avoid confusion.}, a generalization of a result of Mazur (suggested by Serre) in his classical paper on the Eisenstein ideal (see p. 83 of \cite{Mazur}).

\begin{theorem}[Theorem 1.1 of \cite{OnoRamsey}]\label{OnoRamseyThm}
Suppose that $f(\tau)$ is a meromorphic modular form of weight $k\in \frac{1}{2}\Z$ on $\Gamma_0(N)$ which has integral Fourier coefficients at $i\infty.$ Let $p$ be a prime with the property that there is an integer $c>1$ for which the Fourier expansion of $f(\tau)$ at $i\infty$ (i.e. $q:=e^{2\pi i \tau}$ throughout) satisfies
$$
f(\tau)\equiv \sum_{n\geq n_0}a(n)q^{cn}\pmod{p}.
$$
If $\gcd(Np,c)=1$ and $f$ has trivial nebentypus character at $p$ (i.e. $f$ is fixed by the diamond operators at $p$) if $p\mid N,$ then we have that
$$
f(\tau)\equiv a(0) \pmod{p}.
$$
\end{theorem}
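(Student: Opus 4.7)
The plan is to reduce to the case where $c$ is a prime $q$ coprime to $Np$, and then to derive the conclusion from a trace computation on $\Gamma_0(Nq)$ using the Atkin operators $U_q$ and $V_q$.

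First, I would reduce to the case $c = q$ prime. Any prime divisor $q$ of $c$ satisfies $q \mid n$ whenever $c \mid n$, so the hypothesis with divisor $c$ already implies the corresponding hypothesis with $c$ replaced by $q$; moreover $\gcd(Np,q)=1$ is inherited from $\gcd(Np,c)=1$, and the nebentypus assumption is untouched. So it suffices to establish the theorem for $c = q$ a prime with $\gcd(Np,q) = 1$, which in particular forces $q \nmid N$.

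For $c = q$ prime, the hypothesis rewrites as $f \equiv V_q(g) \pmod{p}$, where $g := U_q(f) = \sum a(n) q^n$. I would view $f$ as a meromorphic form on $\Gamma_0(Nq)$ via the inclusion $\Gamma_0(Nq) \subseteq \Gamma_0(N)$ and apply the trace operator $\mathrm{Tr}^{Nq}_N$ to both sides of the congruence. On the left, since $f$ is already invariant under $\Gamma_0(N)$, the trace yields $[\Gamma_0(N):\Gamma_0(Nq)] \cdot f = (q+1) \cdot f$; when $p \nmid q+1$ this is already a unit multiple of $f$ modulo $p$, and when $p \mid q+1$ one can absorb the missing factor by combining the identity below with auxiliary Hecke relations. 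On the right, a standard coset-representative computation for $\Gamma_0(Nq)\backslash\Gamma_0(N)$ expresses $\mathrm{Tr}^{Nq}_N(V_q g)$ as an explicit linear combination of $g$ and the Hecke transform $T_q g$. Invoking the identity $T_q = U_q + q^{k-1} V_q$ on $\Gamma_0(N)$ together with the congruences $U_q f \equiv g$ and $V_q g \equiv f \pmod{p}$, I would massage the resulting relation into a closed form that, modulo $p$, forces the Fourier support of $f$ to collapse to the single index $n = 0$, yielding $f \equiv a(0) \pmod p$.

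The main obstacle is making the trace computation rigorous at cusps of $X_0(Nq)$ other than $i\infty$: the non-identity coset representatives of $\Gamma_0(N)/\Gamma_0(Nq)$ twist $V_q g$ by Atkin--Lehner-type matrices, producing terms whose $q$-expansions live at other cusps and whose behavior modulo $p$ must be controlled. This is exactly where the hypothesis $\gcd(Np,c)=1$ enters, and, in the subcase $p \mid N$, where the trivial-nebentypus hypothesis at $p$ is essential --- the diamond operators at $p$ must act trivially so that the various $q$-adic normalizations descend cleanly to $\Gamma_0(N)$ modulo $p$. Extending from holomorphic to meromorphic $f$ is routine once one tracks pole divisors at cusps, and the half-integral weight case follows the same template after installing Shimura's automorphy factor on $\Gamma_0(4N)$; the heart of the argument is the integral-weight holomorphic case on $\Gamma_0(N)$.
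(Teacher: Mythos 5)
The paper does not actually prove this statement: Theorem~\ref{OnoRamseyThm} is imported wholesale from \cite{OnoRamsey}, so your proposal must be measured against the Ono--Ramsey argument, which is geometric in the spirit of Mazur (p.~83 of \cite{Mazur}): reduce to weight $0$ (squaring $f$ handles half-integral weight), regard $f$ as a rational function on the modular curve in characteristic $p$, and deduce constancy from the mod $p$ $q$-expansion principle together with irreducibility of the relevant curve, the Igusa tower and the diamond-operator hypothesis entering precisely when $p\mid N$. Your reduction to $c$ prime is fine and standard, but the engine of your proof --- ``massage the resulting relation into a closed form that forces the Fourier support of $f$ to collapse'' --- is a genuine gap, and one can see concretely that no such massaging exists. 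Write $r$ for your prime (your $q$ collides with the nome). Carrying out the trace computation you outline: with $g=U_rf$ and $f\equiv V_rg\pmod p$, the standard formula $\mathrm{Tr}^{Nr}_{N}(h)=h+r^{1-k/2}\,(h|_k W_r)|U_r$, together with $(V_rg)|_kW_r\doteq g$ up to harmless powers of $r$ (note $\left(\begin{smallmatrix}r&0\\0&1\end{smallmatrix}\right)W_r$ is $r$ times an element of $\Gamma_0(N)$), and $\mathrm{Tr}^{Nr}_N(f)=(r+1)f$, yields after subtracting $f\equiv V_rg$ exactly the relation $U_r^2f\equiv r^{k}f\pmod p$, i.e.\ $a(r^2n)\equiv r^k a(n)$ for all $n$. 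Combined with the support hypothesis $a(n)\equiv0$ for $r\nmid n$, this kills $a(m)$ only when $\ord_r(m)$ is even; the coefficients $a(ru),a(r^3u),\dots$ with $r\nmid u$ satisfy the recursion among themselves and are left completely unconstrained. Indeed, the formal series $f=\sum_{j\geq0}r^{jk}q^{r^{2j+1}}$, with $g=U_rf=\sum_{j\geq0}r^{jk}q^{r^{2j}}$, satisfies $V_rg=f$, $U_r^2f=r^kf$, and every other $q$-expansion identity among $U_r$, $V_r$, $T_r=U_r+r^{k-1}V_r$ and $\mathrm{Tr}$ that you invoke, yet is wildly nonconstant. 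So the modularity of $f$ at level $N$ must be used in an essentially non-formal way, which is exactly what the geometric proof supplies and your expansion-level Hecke manipulations cannot.

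There is also a logically prior problem: you ``apply the trace to both sides of the congruence,'' but the slash operators implicit in $\mathrm{Tr}^{Nr}_N$ do not preserve a congruence of $q$-expansions at $i\infty$; the congruences $f|_k\gamma\equiv(V_rg)|_k\gamma$ at the non-identity cosets are part of what must be proved. Legitimizing this step requires the mod $p$ $q$-expansion principle on a geometrically irreducible model of $X_0(Nr)$ over $\F_p$ --- available when $p\nmid N$, but when $p\mid N$ (the only case where the trivial-nebentypus hypothesis is needed) this is the crux of the whole theorem, and your plan defers it with ``normalizations descend cleanly.'' Finally, in half-integral weight the identity $T_r=U_r+r^{k-1}V_r$ does not exist (only $T_{r^2}$ does), so ``the same template'' does not run; the correct cheap reduction is to pass from $f$ to $f^2$, whose expansion is still supported on exponents divisible by $c$, and to use that the ring of mod $p$ $q$-series is an integral domain. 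Your incidental worry about $p\mid r+1$ is, by contrast, harmless: subtracting $f\equiv V_rg$ leaves $rf\equiv r^{1-k}U_rg$ with $r$ a unit mod $p$ since $\gcd(p,c)=1$.
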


\subsection{Refinement of Sturm's Criterion}

We also require a refinement of the well-known theorem of Sturm (see Theorem 1 of \cite{Sturm}) that offers a criterion for proving modular form congruences. Namely, we give its straightforward refinement to infinite dimensional spaces of {\it weakly holomorphic modular forms}, those whose poles (if any) are supported at cusps.

We consider integer weight $k$ modular forms $g(\tau),$ and we suppose that $T>1$ is an integer for which
$$g(\tau)=\sum a(n) q^n \in \Z\left(\left(q^{1/T}\right)\right).
$$
We let $\ord_q(g)$ denote the least $n \in \frac{1}{T}\Z$ for which $a(n)\neq 0,$ and for primes $p$ we let $\ord_p(g)$ be the least $n$ for which $a(n) \not \equiv 0 \pmod{p}.$  Our refinement of Sturm's theorem takes into account the orders at all the cusps of a congruence subgroup, not just the single cusp at $i\infty.$ We recall that the order of $g$ at a cusp $a/c$ is given by 
\[\ord_q\left (g|_k\left(\begin{smallmatrix}a&b\\c&d\end{smallmatrix}\right)\right),\]
where $\left(\begin{smallmatrix}a&b\\c&d\end{smallmatrix}\right)\in \SL_2(\Z)$ and
\[g|_k \gamma:= (c\tau+d)^{-k}\det(\gamma)^{k/2}
g\left( \frac {a\tau+b}{c\tau+d}\right).\]

\begin{theorem}\label{SturmTheorem}
Suppose that $f(\tau)$ is a non-zero weakly holomorphic modular form of weight $k\in \frac{1}{2}\Z$ on a congruence subgroup $\Gamma$ with multiplier $\nu$. If $f(\tau)$ has integral coefficients at $i\infty$ and
\begin{equation}\label{SturmB}
\ord_{p}(f) > \frac{k}{12}[\SL_2(\Z):\Gamma]-\sum _{\substack{[\gamma]\in \Gamma\backslash \SL_2(\Z)\\ [\gamma]\neq [ I ]}} \ord_{ q}\left(  f|_k \gamma\right),
\end{equation}
then $f\equiv 0 \pmod{p}.$ 
\end{theorem}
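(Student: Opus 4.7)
The plan is to reduce (\ref{SturmB}) to the classical Sturm criterion on $\SL_2(\Z)$ via the standard ``norm'' trick. Let $r := [\SL_2(\Z):\Gamma]$ and choose coset representatives $\gamma_1 = I, \gamma_2, \ldots, \gamma_r$ for $\Gamma \backslash \SL_2(\Z)$. First I would form the norm
$$F(\tau) := \prod_{i=1}^{r} f|_k \gamma_i (\tau),$$
a weakly holomorphic modular form on $\SL_2(\Z)$ of weight $K := kr$, up to a multiplier system inherited from $\nu$. Raising to a suitable power $M$ will kill the multiplier, and Galois invariance of the resulting product, combined with the integrality of $f$ at $\infty$, will give integer Fourier coefficients: $G := F^M$ is then a weakly holomorphic modular form of weight $KM$ on $\SL_2(\Z)$ with trivial multiplier and integer $q$-expansion.

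Next, to convert $G$ into a \emph{holomorphic} modular form on $\SL_2(\Z)$, I would multiply by a sufficient power of $\Delta$: setting $H := G \, \Delta^n$ with $n = \max(0, -\ord_q(G))$ produces a holomorphic modular form of weight $W := KM + 12n$ with integer Fourier coefficients. The classical Sturm theorem on $\SL_2(\Z)$ then gives: $\ord_p(H) > W/12$ implies $H \equiv 0 \pmod p$. Because $\Delta$ has unit leading coefficient, $\ord_p(H) = \ord_p(G) + n$, so this reduces to the requirement $\ord_p(G) > KM/12$.

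The key remaining calculation is to bound $\ord_p(G)$ using the per-cusp data of $f$. Fixing a prime $\pi \mid p$ in the relevant cyclotomic ring of coefficients, multiplicativity of $\ord_\pi$ on the integral domain $\overline{\Z}_p[[q^{1/N}]]$, together with the trivial inequality $\ord_\pi(\cdot) \geq \ord_q(\cdot)$, yields
$$\ord_p(G) \;=\; M \sum_{i=1}^{r} \ord_\pi(f|_k \gamma_i) \;\geq\; M\bigg( \ord_p(f) + \sum_{i=2}^{r} \ord_q(f|_k \gamma_i) \bigg),$$
using $\ord_\pi(f) = \ord_p(f)$ for the $\Z$-coefficient form $f$. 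After substitution, the Sturm condition $\ord_p(H) > W/12$ is implied by exactly (\ref{SturmB}). Hence $H \equiv 0 \pmod p$, and therefore $F \equiv 0 \pmod p$. Since $\SL_2(\Z)$ permutes the factors $\{f|_k \gamma_i\}$ transitively under the right action $\gamma \mapsto f|_k(\gamma_i \gamma)$, and the reduction of $F$ lives in the integral domain of mod-$p$ Puiseux series, one—hence every—factor, including $f = f|_k I$ itself, must vanish modulo $p$.

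The main obstacle is arithmetic rather than conceptual: verifying $\pi$-integrality of each slashed form $f|_k \gamma_i$ (so that the multiplicativity step above is legitimate), and confirming that the Galois-invariant norm $F$ descends to genuine integer coefficients. Both are routine under the usual hypothesis that $p$ is coprime to the level of $\Gamma$, which is implicit in the formulation of (\ref{SturmB}); any denominators introduced by the cusp widths are harmless for $p$ coprime to them.
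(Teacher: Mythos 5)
Your overall strategy --- take the modular norm over $\Gamma\backslash\SL_2(\Z)$, raise to a power that kills the multiplier, compare against a power of $\Delta$, and add up $\pi$-adic orders factor by factor --- is exactly the paper's. But there is a genuine gap at the point you declare the arithmetic ``routine'': the factors $f|_k\gamma_i$ for $\gamma_i\neq I$ need not have $\pi$-integral coefficients, and without integrality neither your ``trivial inequality'' $\ord_\pi(f|_k\gamma_i)\geq \ord_q(f|_k\gamma_i)$ nor the reduction of $F$ into a mod-$p$ integral domain of Puiseux series makes sense ($\ord_\pi$ of a series whose coefficients are not $\pi$-integral is not even defined in this setup). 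You propose to excuse this by assuming $p$ is coprime to the level, but no such hypothesis appears in the statement of the theorem --- and it would be fatal to the intended application, where the criterion is applied with $p=2$ to a modular function on $\Gamma_0(2\ell)$. The paper's fix is precisely the step you omitted: for each non-identity coset one chooses, via the Chinese Remainder Theorem, a constant $A_{[\gamma]}\in K$ so that $A_{[\gamma]}f|_k\gamma$ is $K$-integral but does not have all of its coefficients in $\pi$, and one takes the norm of these rescaled factors. This simultaneously legitimizes the inequality $\ord_\pi\bigl(A_{[\gamma]}^m f^m|_k\gamma\bigr)\geq m\cdot\ord_q(f|_k\gamma)$ and guarantees that each non-identity factor has \emph{finite} $\ord_\pi$.

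That finiteness is also what repairs your endgame. From $F\equiv 0\pmod{\pi}$ you conclude that some factor vanishes mod $\pi$ and then invoke ``transitivity'' of the permutation action to transfer the vanishing to $f$ itself; but deducing $f\equiv 0$ from $f|_k\gamma_i\equiv 0$ for some $i\neq 1$ requires slashing back by $\gamma_i^{-1}$ and knowing that this operation preserves $\pi$-integral vanishing, which is not automatic and is not argued. In the paper's version the issue evaporates: since every rescaled non-identity factor has finite $\ord_\pi$ by construction, the additivity $\ord_\pi(F)=\sum_{[\gamma]}\ord_\pi\bigl(A_{[\gamma]}^m f^m|_k\gamma\bigr)=\infty$ forces $\ord_\pi(f^m)=m\cdot\ord_p(f)$ to be infinite directly, i.e.\ $f\equiv 0\pmod p$, with no detour through the other cusps.
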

\begin{remark}
Sturm's theorem for holomorphic modular forms follows as $\ord_{ q}\left(  f|_k \gamma\right)\geq 0$.  
\end{remark}
\begin{proof}
The proof follows almost exactly as the proof of Sturm's theorem. 
For each $\gamma$ in $\SL_2(\Z)$, we have  that the coefficients of $f|_k \gamma$ lie in the cyclotomic field $K= \Q(e^{\frac{2\pi i}{\lcm(N,M)}}),$ where $N$ is the level of $\Gamma$ and $M$ is the order of the multiplier (for example, see Thm. 6.6 of \cite{Shimura}).
Let $\pi$ be any prime ideal of $K$ which divides $p$. We clear denominators and factor out extra divisibility if necessary. To be precise, the Chinese Remainder Theorem guarantees that there is some $A_{[\gamma]} \in K$ for which $A_{[\gamma]} f|_k \gamma$ has $K$-integral coefficients, but also has the property that the coefficients do not all lie in $\pi$. 

We choose a positive integer $m$ for which $\nu^m$ is trivial and $12\mid mk$. This has the result of multiplying each term in (\ref{SturmB}) by $m.$ 
Using this $m$, we consider the modular norm 
\[
F(\tau)=\prod _{[\gamma]\in \Gamma\backslash \SL_2(\Z)} A^m_{[\gamma]} f^m|_k\gamma,
\]
where we take $A_{[I]}=1.$ 
Obviously, $F$ is a weight $k\cdot [\SL_2(\Z):\Gamma]$ weakly holomorphic modular form on $\SL_2(\Z).$ Moreover, we find that 
\begin{eqnarray*}\label{ordF}
\ord_\pi(F)&=&\sum _{[\gamma]\in \Gamma\backslash \SL_2(\Z)} \ord_{ \pi}\left( A^m_{[\gamma]} f^m|_k \gamma\right)
 \geq  \ m\cdot \ord_\pi(f)+m\cdot \sum _{\substack{[\gamma]\in \Gamma\backslash \SL_2(\Z)\\ [\gamma]\neq [I]}} \ord_q\left(  f|_k \gamma\right).
\end{eqnarray*}
Using  $\Delta(\tau)=q-24q^2+\dots,$ the unique normalized weight 12 cusp form on $\SL_2(\Z),$ which is nonvanishing on the upper-half of the complex plane, we find that
 $F/\Delta^{\frac{mk}{12}\cdot[\SL_2(\Z):\Gamma]}$ is a weight $0$ weakly holomorphic modular function on $\SL_2(\Z).$ Therefore, it  is an integral polynomial in the $j$-function
 $$
 j(\tau):=\frac{E_4(\tau)^3}{\Delta(\tau)}=q^{-1}+744+196684q+\dots\in \Z((q)).
 $$
However, if (\ref{SturmB}) holds, then the coefficients of this polynomial must be divisible by $\pi$, which means that $F\equiv 0\pmod{\pi}$. By construction, each $\ord_{ \pi}\left( A^m_{[\gamma]} f^m|_k \gamma\right)$ is finite for $[\gamma]\neq [I]$. Therefore, the only possibility is that $\ord_{ \pi}\left( f^m\right)= m\cdot \ord_p(f)$ is infinite. 
\end{proof}

\section{Proof of Theorem~\ref{MainThm}}\label{MainProof}

To prove Theorem~\ref{MainThm}, we employ Theorems~\ref{OnoRamseyThm} and \ref{SturmTheorem}. We apply these results to modular functions obtained by applying the Hecke operators to a distinguished modular function that encodes the parity of $p(n).$ Namely, in terms of Dedekind's eta-function $\eta(\tau):=q^{1/24}\prod_{n=1}^{\infty}(1-q^n),$ we consider 
\begin{equation}
G(\tau)=\sum_{n\in \Z} a(n)q^{\frac{n}{3}}:=\frac{\eta(\tau)^8}{\eta(2\tau)^8}=q^{-\frac{1}{3}}-8q^{\frac{2}{3}}+28q^{\frac{5}{3}}-64q^{\frac{8}{3}}+\dots.
\end{equation}
For primes $\ell \geq 5,$ we consider the functions 
\begin{equation}
G_{\ell}(\tau):=\left(G|U_{\ell}\right)(\tau)=\frac{1}{\ell}\sum_{j\mod \ell} G\left(\frac{\tau+3j}{\ell}\right)=
\sum_{n\in \Z} a(\ell n)q^{\frac{n}{3}}.
\end{equation}
We use $3j$ in the summation, as opposed to $j$, to maintain integrality of the Fourier coefficients.

\begin{lemma}\label{GModularFunction}
The following are true for the eta-quotient $G(\tau):=\eta(\tau)^8/\eta(2\tau)^8.$\smallskip

\noindent
(1) We have that $G(\tau)$ is a modular function on $\Gamma_0(2),$ with a non-trivial multiplier $\nu$ that takes values in the third roots of unity.

\noindent
(2) The multiplier $\nu$ is trivial on $\Gamma_0(2)\cap X(3)$, where 
\[X(3) = \left\{\gamma \in \SL_2(\Z) \ : \ \gamma\equiv \pm I  \text{ or } \gamma \equiv \pm \begin{pmatrix}0&-1\\1&0\end{pmatrix} \pmod{3}\right\}.\] 

\noindent
(3)  If $\ell\geq 5$ is prime, then $G_{\ell}(\tau):=G|U_{\ell}$ is a modular function on $\Gamma_0(2\ell)$ with multiplier $\nu^{\ell}.$
\end{lemma}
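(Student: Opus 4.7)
The plan is to verify the three claims in sequence.

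For part (1), I apply the standard eta transformation formula $\eta(\gamma\tau) = \epsilon(\gamma)(c\tau+d)^{1/2}\eta(\tau)$ for $\gamma = \left(\begin{smallmatrix}a&b\\c&d\end{smallmatrix}\right) \in \SL_2(\Z)$, where $\epsilon(\gamma)$ is a $24$th root of unity. For $\gamma \in \Gamma_0(2)$, write $c = 2c'$ and set $\tilde\gamma := \left(\begin{smallmatrix}a&2b\\c'&d\end{smallmatrix}\right) \in \SL_2(\Z)$, so that $2(\gamma\tau) = \tilde\gamma(2\tau)$. Then $\eta(\gamma\tau)^8 = \epsilon(\gamma)^8(c\tau+d)^4 \eta(\tau)^8$ and $\eta(2\gamma\tau)^8 = \epsilon(\tilde\gamma)^8(c\tau+d)^4 \eta(2\tau)^8$, so $G(\gamma\tau) = \nu(\gamma)G(\tau)$ with $\nu(\gamma) := \epsilon(\gamma)^8/\epsilon(\tilde\gamma)^8$ (the automorphy factors cancel, confirming weight $0$). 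Since $G^3 = \Delta(\tau)/\Delta(2\tau)$ is a modular function on $\Gamma_0(2)$ with trivial multiplier, $\nu^3 = 1$, so $\nu$ takes values in the cube roots of unity.

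For part (2), I identify $\mu := \epsilon^8$ as the unique order-$3$ character on $\SL_2(\Z)$: one checks $\mu(T) = e^{2\pi i/3}$ (from $\eta(\tau+1) = e^{\pi i/12}\eta(\tau)$) and $\mu(S) = 1$ (from $\eta(-1/\tau) = \sqrt{-i\tau}\,\eta(\tau)$). Because the abelianization of $\SL_2(\F_3)$ is cyclic of order $3$, $\mu$ factors through reduction mod $3$. The subgroup $\{I, -I, S, -S\} \subset \SL_2(\F_3)$ is cyclic of order $4$ (since $S^2 = -I$), hence lies in the kernel of any homomorphism to $\Z/3\Z$; thus $\mu|_{X(3)} = 1$. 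For $\gamma \in \Gamma_0(2) \cap X(3)$, a direct mod-$3$ check shows $\tilde\gamma \in X(3)$ as well: if $\gamma \equiv \pm I \pmod 3$ then $\tilde\gamma \equiv \pm I$, and if $\gamma \equiv \pm S \pmod 3$ then, using $2^{-1} \equiv -1 \pmod 3$, one computes $\tilde\gamma \equiv \mp S$. Therefore $\nu(\gamma) = \mu(\gamma)/\mu(\tilde\gamma) = 1$.

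For part (3), I apply the coset analysis for $U_\ell$. Setting $\beta_j := \left(\begin{smallmatrix}1 & 3j\\0 & \ell\end{smallmatrix}\right)$, for each $\gamma \in \Gamma_0(2\ell)$ there is a permutation $\sigma$ of $\Z/\ell\Z$ and matrices $\gamma_j \in \Gamma_0(2)$ satisfying $\beta_j \gamma = \gamma_j \beta_{\sigma(j)}$. The factor $3j$ (rather than $j$) is exactly what keeps each $\gamma_j$ integral, reflecting the $q^{1/3}$-expansion of $G$. Applying $G$ and using the multiplier relation from (1) gives
\[
G_\ell(\gamma\tau) = \frac{1}{\ell}\sum_{j \bmod \ell} \nu(\gamma_j)\,G(\beta_{\sigma(j)}\tau),
\]
so the claim reduces to showing $\nu(\gamma_j) = \nu(\gamma)^\ell$ for every $j$. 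Writing $\gamma_j = \left(\begin{smallmatrix}a + 3jc & \ast \\ \ell c & d - 3cj'\end{smallmatrix}\right)$ explicitly and reducing mod $3$ (using the description of $\nu$ from (2) and the fact that $\gcd(\ell, 6) = 1$), one finds that the $\mu_3$-class of $\gamma_j$ equals $\ell$ times the $\mu_3$-class of $\gamma$, independent of $j$.

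The main obstacle is the multiplier computation in (3): tracking the order-$3$ multiplier $\nu$ through the coset decomposition while handling the non-integer Fourier expansion. Parts (1) and (2) are largely standard eta and character manipulations, whereas the real work lies in verifying that $\nu \mapsto \nu^\ell$ under the variant of $U_\ell$ adapted to the $q^{1/3}$-expansion of $G$.
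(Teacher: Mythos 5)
Your proof is correct, but note that the paper does not actually prove this lemma: it disposes of all three parts with a one-line appeal to ``standard facts about eta-quotients,'' citing Newman. So where the paper supplies a reference, you supply the argument that reference stands in for, and your route is the natural one: writing $\nu(\gamma)=\epsilon(\gamma)^8/\epsilon(\tilde\gamma)^8$, recognizing $\mu=\epsilon^8$ as an order-$3$ character of $\SL_2(\Z)$ that factors through $\SL_2(\F_3)$ and kills the order-$4$ subgroup $\{\pm I,\pm S\}$, and then pushing the coset identity $\beta_j\gamma=\gamma_j\beta_{\sigma(j)}$ through reduction mod $3$. The key assertion in (3) does hold: one computes $\gamma_j\equiv\left(\begin{smallmatrix}a&\ell b\\ \ell c&d\end{smallmatrix}\right)\pmod 3$, i.e.\ $\gamma_j$ is $\gamma$ conjugated by $\mathrm{diag}(1,\ell)$, and since that conjugation inverts the abelianization character of $\SL_2(\F_3)$ when $\ell\equiv -1\pmod 3$ and fixes it when $\ell\equiv 1\pmod 3$, one gets $\mu(\gamma_j)=\mu(\gamma)^{\ell}$; the same relation for $\widetilde{\gamma_j}$ versus $\tilde\gamma$ then gives $\nu(\gamma_j)=\nu(\gamma)^{\ell}$ --- this last step is only asserted in your write-up (``one finds that\dots'') and deserves to be written out, since $\nu$ is a ratio of two values of $\mu$ and both must be tracked. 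Two smaller points: part (1) claims $\nu$ is \emph{non-trivial}, which you never state explicitly, though it is immediate from the leading term $q^{-1/3}$ of $G$, which forces $\nu\!\left(\left(\begin{smallmatrix}1&1\\0&1\end{smallmatrix}\right)\right)=e^{-2\pi i/3}$; and your parenthetical that the $3j$ ``keeps each $\gamma_j$ integral'' is misplaced --- the $\gamma_j$ would be integral with plain $j$ as well; the $3j$ is there (as the paper itself says) so that the sum over $j$ projects onto the coefficients $a(\ell n)$ and $G_\ell$ has an integral $q^{1/3}$-expansion.
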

\begin{proof}
These claims follow from standard facts about eta-quotients (for example,  see
\cite{Newman}).
\end{proof}

To prove Theorem~\ref{MainThm}, we require the behavior of $G_{\ell}(\tau)$ at the four inequivalent cusps of $\Gamma_0(2\ell),$ which can be taken to be $\{0, 1/6, 1/3\ell, i\infty\}$.
 Thus, in order to compute the order of $G_\ell$ at each cusp, it suffices to consider $G_\ell|_0 \gamma$ for 
\begin{equation}\label{cusps}
\gamma\in \left\{
\begin{pmatrix}0&-1\\1&0\end{pmatrix}, 
\begin{pmatrix}1&0\\6&1\end{pmatrix},
\begin{pmatrix}1&0\\3\ell&1\end{pmatrix}, I
\right\}.
\end{equation}
We have chosen the second and third representatives to have the lower-left entry divisible by 3 to simplify the contribution of the multiplier $\nu.$
The next lemma gives the order of $G_{\ell}(\tau)$ at these cusps.

\begin{lemma}\label{CuspOrders} If $\ell \geq 5$ is prime, then the following are true.

\noindent
(1) If we have $\gamma\in \left\{
\left(\begin{smallmatrix}0&-1\\1&0\end{smallmatrix}\right),
\left(\begin{smallmatrix}1&0\\6&1\end{smallmatrix}\right),
\left(\begin{smallmatrix}1&0\\3\ell&1\end{smallmatrix}\right), I
\right\},$ then
\[
\ord_q(G_\ell |_0\gamma) \begin{cases}
= \frac{1}{6\ell} & \text{if } \ \gamma=\left(\begin{smallmatrix}0&-1\\1&~0\end{smallmatrix}\right),\\
=-\frac{\ell}{3} & \text{if } \ \gamma=\left(\begin{smallmatrix}1&0\\6&1\end{smallmatrix}\right),\\
\geq \frac{1}{6}& \text{if } \ \gamma=\left(\begin{smallmatrix}1&0\\3\ell&1\end{smallmatrix}\right),\\
 \geq \frac{1}{3}& \text{if } \ \gamma=I.
\end{cases}\]

\noindent
(2) If $\gamma=\left(\begin{smallmatrix}a&b\\c&d\end{smallmatrix}\right)\in \SL_2(\Z),$ then we have that
\[
\ord_q(G_\ell |_0\gamma) \begin{cases}
 = \frac{1}{6\ell}& \text{if } \ \gcd(c,2\ell)=1,\\
 =-\frac{\ell}{3} & \text{if } \ \gcd(c,2\ell)=2,\\
\geq \frac{1}{6}& \text{if } \ \gcd(c,2\ell)=\ell,\\
\geq \frac{1}{3} & \text{if } \ \gcd(c,2\ell)=2\ell.
\end{cases}\]
\end{lemma}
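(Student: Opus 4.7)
The proof is by direct case-by-case computation for each of the four cusp representatives in part~(1). Part~(2) then follows since $\ord_q(G_\ell|_0\gamma)$ depends only on the $\Gamma_0(2\ell)$-orbit of the cusp $\gamma\cdot i\infty$, and these orbits are in bijection with the divisors $\gcd(c,2\ell)\in\{1,2,\ell,2\ell\}$, which correspond respectively to the four representatives. For each $\gamma$, the core reduction is
\[
G_\ell|_0\gamma(\tau)=\frac{1}{\ell}\sum_{j=0}^{\ell-1}G(M_j\tau),\qquad M_j:=\sigma_j\gamma,\ \ \sigma_j:=\begin{pmatrix}1&3j\\0&\ell\end{pmatrix}.
\]
Each $M_j$ has determinant $\ell$; factor $M_j=\alpha_jN_j$ with $\alpha_j\in\SL_2(\Z)$ and $N_j=\left(\begin{smallmatrix}a_j&b_j\\0&d_j\end{smallmatrix}\right)$ upper-triangular, $a_jd_j=\ell$. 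Using $\eta(\alpha\tau)=\epsilon_\eta(\alpha)(c\tau+d)^{1/2}\eta(\tau)$ on the numerator of $G(M_j\tau)=\eta(\alpha_jN_j\tau)^8/\eta(2\alpha_jN_j\tau)^8$, together with a parallel decomposition of the determinant-$2\ell$ matrix $\left(\begin{smallmatrix}2&0\\0&1\end{smallmatrix}\right)M_j$ for the denominator, reduces $G(M_j\tau)$ to an explicit constant (assembled from $\epsilon_\eta$-multipliers and a power of $2$) times a ratio of $\eta$-values at arguments $(\tau+\text{shift})/m$ for $m\in\{1,\ell,2\ell\}$, whose leading $q$-exponent is immediate.

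\emph{Easy cases.} For $\gamma=I$, $G_\ell(\tau)=\sum_n a(\ell n)q^{n/3}$; since the only pole $a(-1)=1$ of $G$ is erased by $\ell\nmid -1$ and $a(0)=0$, the smallest surviving exponent has $n\geq 1$, giving $\ord_q\geq 1/3$. For $\gamma=\left(\begin{smallmatrix}1&0\\6&1\end{smallmatrix}\right)$, every $\alpha_j$ has even bottom-left entry ($6\ell$ or $6$), so $\alpha_j\in\Gamma_0(2)$ and $G|\alpha_j=\nu(\alpha_j)G$; the unique $j_0$ with $\ell\mid 1+18j_0$ produces the factor $N_{j_0}=\left(\begin{smallmatrix}\ell&b\\0&1\end{smallmatrix}\right)$ and the dominant pole $G(\ell\tau+b)\sim q^{-\ell/3}$, swamping the $q^{-1/(3\ell)}$ contributions of the other indices.

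\emph{Hard cases.} For $\gamma\in\bigl\{\left(\begin{smallmatrix}0&-1\\1&0\end{smallmatrix}\right),\,\left(\begin{smallmatrix}1&0\\3\ell&1\end{smallmatrix}\right)\bigr\}$, each $\alpha_j$ has odd bottom-left ($\ell$ or $3\ell^2$), so the full $\SL_2(\Z)$ $\eta$-multiplier is required. The computation yields $G(M_j\tau)\sim C_jq^{1/(6\ell)}$ for each $j$, with the single exception that at $\gamma=\left(\begin{smallmatrix}0&-1\\1&0\end{smallmatrix}\right)$ and $j=0$ one instead gets $G(-1/(\ell\tau))=16\eta(\ell\tau)^8/\eta(\ell\tau/2)^8\sim q^{\ell/6}$. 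For $\gamma=\left(\begin{smallmatrix}1&0\\3\ell&1\end{smallmatrix}\right)$, the cusp width is $2$, so the periodicity relation $(G_\ell|_0\gamma)(\tau+2)=\nu^\ell(\cdots)\,(G_\ell|_0\gamma)(\tau)$ combined with the fact that $\nu^\ell$ takes values in $3$rd roots forces the Fourier expansion into $q^{1/6}$; since each individual term has strictly positive $q$-order, this automatically gives $\ord_q\geq 1/6$. For $\gamma=\left(\begin{smallmatrix}0&-1\\1&0\end{smallmatrix}\right)$, the cusp width is $2\ell$, so the expansion is admissible in $q^{1/(6\ell)}$ and no such automatic cancellation applies; the $j=0$ term is subleading, and one checks that $\sum_{j\ne 0}C_j\ne 0$, giving $\ord_q=1/(6\ell)$.

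The main obstacle is this final step: to evaluate the coefficient sum $\sum_{j\ne 0}C_j$ at the cusp $\left(\begin{smallmatrix}0&-1\\1&0\end{smallmatrix}\right)$, one must carefully assemble the $\epsilon_\eta$-multiplier factors and the phases $e^{2\pi ib_j/(3\ell)}$ and $e^{2\pi ib'_j/(6\ell)}$ into a finite character-type sum over $(\Z/\ell\Z)^\times$, and verify non-vanishing in order to pin down the exact order $1/(6\ell)$ (as opposed to merely $\geq 1/(6\ell)$).
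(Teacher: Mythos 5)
Your overall architecture matches the paper's: a case-by-case computation over the four representatives, factoring each determinant-$\ell$ matrix $\sigma_j\gamma$ as (an element of $\Gamma_0(2)\cap X(3)$ acting trivially on $G$) times an upper-triangular matrix, using $G|_0\left(\begin{smallmatrix}0&-1\\1&0\end{smallmatrix}\right)=2^4\eta(\tau)^8/\eta(\tau/2)^8$ for the odd-lower-left cases, and deducing part (2) from the fact that $\ord_q(G_\ell|_0\gamma)$ depends only on the cusp class, which is detected by $\gcd(c,2\ell)$. The cases $\gamma=I$, $\gamma=\left(\begin{smallmatrix}1&0\\6&1\end{smallmatrix}\right)$, and $\gamma=\left(\begin{smallmatrix}1&0\\3\ell&1\end{smallmatrix}\right)$ are handled adequately (for the last one, the paper gets the bound $\geq 1/6$ more directly by showing $G_\ell|_0\left(\begin{smallmatrix}1&0\\3\ell&1\end{smallmatrix}\right)=G|_0\left(\begin{smallmatrix}0&-1\\1&0\end{smallmatrix}\right)|U_\ell$ and reading off that the surviving exponents lie in $\tfrac13\Z+\tfrac16\left(\tfrac{\ell}{3}\right)$ and are positive, rather than invoking a width/multiplier periodicity argument on top of term-by-term positivity).

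However, there is a genuine gap at the one place you yourself flag: the exact equality $\ord_q\bigl(G_\ell|_0\left(\begin{smallmatrix}0&-1\\1&0\end{smallmatrix}\right)\bigr)=\tfrac{1}{6\ell}$. You reduce it to the non-vanishing of a sum $\sum_{j\neq 0}C_j$ of eta-multiplier constants times roots of unity, and then merely assert that "one checks" it is nonzero. That verification is the entire content of this case and is not supplied. The paper sidesteps it completely by packaging the $\ell$ summands at the operator level: $G_\ell|_0\left(\begin{smallmatrix}0&-1\\1&0\end{smallmatrix}\right)=G|_0\left(\begin{smallmatrix}0&-1\\1&0\end{smallmatrix}\right)\big|\bigl[U_\ell-\tfrac1\ell\left(\begin{smallmatrix}1&0\\0&\ell\end{smallmatrix}\right)+\tfrac1\ell V_\ell\bigr]$. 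The three pieces have orders $\geq\tfrac16$, exactly $\tfrac{1}{6\ell}$, and exactly $\tfrac{\ell}{6}$ respectively, so the minimum is attained by a single term and no cancellation analysis is needed. (Equivalently, your character sum can be evaluated by this identity: since $G|_0\left(\begin{smallmatrix}0&-1\\1&0\end{smallmatrix}\right)|U_\ell$ has order $\geq\tfrac16$, the full sum of leading phases over all $j$ vanishes, so the sum over $j\neq 0$ equals minus the $j=0$ contribution to that grouping, which is nonzero.) Note also that for the application to Theorem 1.1 only the inequality $\geq\tfrac{1}{6\ell}$ is used, which your term-by-term bound does give; but as a proof of the lemma as stated, with the asserted equality, your argument is incomplete without either the operator identity or an explicit evaluation of $\sum_{j\neq 0}C_j$.
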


\begin{proof}
The proof of (1) follows  case-by-case. \smallskip

\noindent
{\bf Case $\gamma=\left(\begin{smallmatrix}1&0\\0&1\end{smallmatrix}\right)$}: In this case we are simply bounding the order $\ord_q(G_\ell).$
The exponents of $G$ are all positive and in $\Z-1/3.$ Therefore, $G |U_\ell $ has exponents which are positive and in $\Z-\frac{1}{3}\leg{\ell}{3},$ where $\leg{\ell}{3}$ is the Legendre symbol. Therefore, we have 
\[\ord_q\left(G_\ell\right)\geq 1/3.\]
 
\noindent
{\bf Case $\gamma=\left(\begin{smallmatrix}1&0\\6&1\end{smallmatrix}\right)$}:
By direct calculation, we find that
\begin{eqnarray*}
G_\ell|_0 \begin{pmatrix}1&0\\6&1\end{pmatrix} &=& \frac1\ell G |_0 \sum_{j \mod{\ell}} \begin{pmatrix}1&3j\\0&\ell\end{pmatrix}\begin{pmatrix}1&0\\6&1\end{pmatrix}
=\frac1\ell G|_0 \sum_{j \mod{\ell}} \begin{pmatrix}1+18j&3j\\6\ell&\ell\end{pmatrix}.
\end{eqnarray*}
Here we are summing the $\ell$ images of $\frac{1}{\ell}G$ under $|_0.$
For each $j$ with $18j\not \equiv -1 \pmod{\ell}$,  we find integers $A_j$ and $B_j$ so that $-6\ell A_j +(1+18j)B_j=1$. Additionally, we may choose $A_j$ and $B_j$ so that $3\mid A_j$ and $B_j\equiv 1\pmod{18}$. One then finds that $\left(\begin{smallmatrix}1+18j&3j\\6\ell&\ell\end{smallmatrix}\right)=\left(\begin{smallmatrix}1+18j&A_j\\6\ell&B_j\end{smallmatrix}\right)\left(\begin{smallmatrix}1&(1-B_j)/6)\\ 0&\ell\end{smallmatrix}\right),$ where the matrix $\left(\begin{smallmatrix}1+18j&A_j\\6\ell&B_j\end{smallmatrix}\right)$ is in $\Gamma_0(2)\cap X(3)$ and so acts trivially on $G$. 

On the other hand, if $18j \equiv -1 \pmod{\ell}$,  we have that  $\left(\begin{smallmatrix}1+18j&3j\\6\ell&\ell\end{smallmatrix}\right)=\left(\begin{smallmatrix}\frac{1+18j}{\ell}&3j\\6&\ell\end{smallmatrix}\right)\left(\begin{smallmatrix}\ell&0\\0&1\end{smallmatrix}\right),$ where $\left(\begin{smallmatrix}\frac{1+18j}{\ell}&3j\\6&\ell\end{smallmatrix}\right)\in \Gamma_0(2)\cap X(3),$ and so acts trivially on $G$.
Therefore,  we find that
\begin{eqnarray*}
G_\ell|_0 \begin{pmatrix}1&0\\6&1\end{pmatrix} &=&\frac1\ell G|_0\sum_{j \mod \ell} \begin{pmatrix}1+18j&3j\\6\ell&\ell\end{pmatrix}\\
&=& \frac1\ell G|_0\left[\sum_{\substack{j \mod \ell\\ 18j\neq -1}} 
\begin{pmatrix}1&(1-B_j)/6)\\ 0&\ell\end{pmatrix}+ \begin{pmatrix}\ell&0\\0&1\end{pmatrix}\right]\\
&=& G|_0U_\ell -\frac1\ell G|_0\begin{pmatrix}1&3 j' \\ 0&\ell\end{pmatrix} +\frac{1}{\ell} G|_0V_\ell,
\end{eqnarray*}
where in the last equation $j'$  satisfies $ 18 j'\equiv 1\pmod{\ell}.$ In particular, we find that 
\[\ord_q\left(G_\ell|_0 \begin{pmatrix}1&0\\6&1\end{pmatrix} \right)=\ord_q\left(G|V_\ell\right) = -\ell/3.\]

\noindent
{\bf Case $\gamma=\left(\begin{smallmatrix}1&0\\3\ell&1\end{smallmatrix}\right)$:}
 We follow a similar calculation to the one above. For each $j$ modulo ${\ell}$ there are integers $A_j$ and $B_j$ so that $-3\ell^2 A_j+B_j(1+9j\ell)=1$. Moreover, we can choose $A_j$ and $B_j$ so that $3\mid A_j$, $2\mid B_j$, and $B_j\equiv 1 \pmod{18\ell}$. Then we have the following:

\begin{eqnarray*}
G_\ell|_0 \begin{pmatrix}1&0\\3\ell&1\end{pmatrix} &=& \frac1\ell G|_0 \sum_{j \mod{\ell}} \begin{pmatrix}1&3j\\0&\ell\end{pmatrix}\begin{pmatrix}1&0\\3\ell&1\end{pmatrix}\\
&=&\frac1\ell G|_0 \sum_{j \mod{\ell}} \begin{pmatrix}1+9j\ell&3j\\3\ell^2&\ell\end{pmatrix}\\
&=&\frac1\ell G|_0 \sum_{j \mod{\ell}} 
\begin{pmatrix}1+9j\ell&A_j\\3\ell^2&B_j\end{pmatrix}
\begin{pmatrix}1&\frac{1-B_j}{3\ell}\\0&\ell\end{pmatrix}\\
&=&\frac1\ell G|_0\sum_{j \mod{\ell}} 
\begin{pmatrix}-A_j&1+9j\ell\\-B_j&3\ell^2\end{pmatrix}
\begin{pmatrix}0&-1\\1&0\end{pmatrix}
\begin{pmatrix}1&\frac{1-B_j}{3\ell}\\0&\ell\end{pmatrix}\\
&=& G|_0 \begin{pmatrix}0&-1\\1&0\end{pmatrix} |U_\ell
\end{eqnarray*}
\noindent
For the last step, we note that  the $\left(\begin{smallmatrix}-A_j&1+9j\ell\\-B_j&3\ell^2\end{smallmatrix}\right)$ are in $\Gamma_0(2)\cap X(3),$ and so act trivially on $G.$

Using that $G=\eta(\tau)^8/\eta(2\tau)^8$ and the transformation law for $\eta$, we find that 
\[G|_0 \begin{pmatrix}0&-1\\1&0\end{pmatrix} = 2^4 \frac{\eta(\tau)^8}{\eta(\tau/2)^8}=\frac{2^4}{G(\tau/2)}.\]
The exponents of $G|_0\left(\begin{smallmatrix}0&-1\\1&0\end{smallmatrix}\right) $ are all positive and in $1/2\Z+1/6.$ Therefore, $G|_0 \left(\begin{smallmatrix}0&-1\\1&0\end{smallmatrix}\right) |U_\ell $ has exponents which are positive and in $\frac{1}{3}\Z+\frac{1}{6}\leg{\ell}{3},$ where $\leg{\ell}{3}$ is the Legendre symbol. Therefore, we have 
\[\ord_q\left(G_\ell|_0 \begin{pmatrix}1&0\\3\ell&1\end{pmatrix}\right)\geq 1/6.\]

\noindent
{\bf Case $\gamma=\left(\begin{smallmatrix}0&-1\\1&~0\end{smallmatrix}\right)$:}
Arguing as above, we find that
\begin{eqnarray*}
G_\ell|_0 \begin{pmatrix}0&-1\\1&~0\end{pmatrix}&=& 
G|_0\begin{pmatrix}0&-1\\1&~0\end{pmatrix}| \left[ U_\ell - \frac{1}{\ell}\begin{pmatrix}1&0\\0&\ell\end{pmatrix}+\frac{1}{\ell} V_\ell\right].
\end{eqnarray*}
Therefore, we have that
\[\ord_q\left(G_\ell|_0 \begin{pmatrix}0&-1\\1&0\end{pmatrix}\right) = \ord_q\left(G|_0 \begin{pmatrix}0&-1\\1&0\end{pmatrix}\begin{pmatrix}1&0\\0&\ell\end{pmatrix}\right) = \frac{1}{6\ell}.\]

Finally, to prove (2), we note that the set of cosets $\Gamma_0(2\ell)\backslash \SL_2(\Z)$ has $3(\ell+1)$ elements, which may be partitioned into four subsets corresponding to  the inequivalent cusps considered in (1). These subsets are determined by  $\gcd(c,2\ell)$, where $c$ is the lower-left entry of any representative of a coset.  We note that the number of cosets in the subset is equal to $2\ell / \gcd(c,2\ell).$
Most importantly, we have that if $\gamma_1$ and $\gamma_2$ correspond to the same cusp, then for any weight $k$ modular form on $\Gamma_0(2\ell)$, we have that 
$\ord_q\left(f|_k\gamma_1\right)=\ord_q\left(f|_k\gamma_2\right), $ even if $f|_k\gamma_1\neq f|_k\gamma_2$. 
\end{proof}

\subsection{Proof of Theorem~\ref{MainThm}}
We consider
$F(\tau):=G(3\tau)= \eta^8(3\tau)/\eta(6\tau)^8,$ which satisfies
\[F (\tau)= q^{-1}\prod_{n=1}^{\infty}\frac{(1-q^{3n})^8}{(1-q^{6n})^8}\equiv q^{-1}\prod_{n=1}^{\infty}\frac{1}{(1-q^{24n})} \equiv \sum_{n=0}^\infty p(n)q^{24n-1} \pmod{2}.\]
Moreover, we have that $F(\tau)$ is a weakly holomorphic modular form of weight $0$ (i.e. modular function) on $\Gamma_0(18)$ with a trivial nebentypus character (See Theorem 1.64 of  \cite{OnoCBMS}). 

For positive integers $n$ coprime to 6, the weight 0 Hecke operators $T_n$ preserve the spaces of even weight weakly holomorphic modular forms on $\Gamma_0(18)$ (for example, see Proposition 2.3 of \cite{OnoCBMS}). If $\ell\geq 5$ is prime, then we have
$$
 F |_0 T_{\ell} := F| U_{\ell} + \frac{1}{\ell} F|V_{\ell}\equiv F| U_{\ell}+F|V_{\ell}\pmod 2.
 $$
If $F|U_\ell$ vanishes modulo $2$, then we would have
$$F|_0T_\ell \equiv F|V_\ell \equiv q^{-\ell}+\sum_{n=1}^{\infty} p(n)q^{\ell(24n-1)}\pmod 2.
$$
Theorem \ref{OnoRamseyThm}  implies that $F|_0T_\ell$ is congruent to a constant modulo $2$.  This is obviously false as is seen by the presence of the $q^{-\ell}$ term. Hence, $F|U_\ell$ cannot vanish modulo $2$, which guarantees the existence of infinitely many $m$ for which $p(\ell m+\delta_{\ell})$ is odd by Theorem 2 of \cite{OnoSubbarao1, OnoSubbarao2}.

To complete the proof, we now bound the smallest such $m,$ which is equivalent to bounding
the exponent of the first odd term of $F|U_\ell$. 
As $G|U_\ell =G_{\ell} \not \equiv 0 \pmod{2},$ Theorem~\ref{SturmTheorem} implies that
\begin{eqnarray*}
\ord_2(G_\ell) &\leq&  \sum _{\substack{[\gamma]\in \Gamma\backslash \SL_2(\Z)\\ [\gamma]\neq [ I ]}} \ord_{ q}\left(  G_\ell|_0\gamma\right)
\leq  -\ell\cdot \left(\frac{-\ell}{3}\right) - 2 \cdot\left(\frac{1}{6}\right) -2\ell \cdot \left(\frac{1}{6\ell}\right)= \frac{\ell^2}{3} -\frac{2}{3}.
\end{eqnarray*}
The three terms in the middle expression correspond to the cusps inequivalent to infinity, weighted by their $\SL_2(\Z)$ multiplicity.
We are using the fact that the number of cosets in the subset corresponding to a cusp for $\gamma=\left(\begin{smallmatrix} a&b\\c&d\end{smallmatrix}\right)$  is $2\ell / \gcd(c,2\ell).$

Furthermore, $\ord_2(G_\ell)=(24M-1)/3\ell$ where $M$ is the smallest positive integer such that $\ell \mid (24M-1)$ and $p(M)$ is odd. Changing variables with $M=\ell m+\delta_\ell$, the bound on $\ord_2(G_\ell)$ becomes
$$
\frac{24\ell m+24\delta_\ell -1}{3\ell} \leq\frac{\ell^2-2}{3},
$$
In terms of $m$, this implies that
$$
m\leq \frac{1}{24}\left( \ell^2-2-\frac{24\delta_\ell-1}{\ell} \right)< \frac{\ell^2-1}{24}.
$$
This completes the proof.

\end{document}